\definecolor{mybluegreen}{rgb}{0.1, 0.55, 0.35}
\newtheorem{theorem}{Theorem}[section]
\newtheorem{example}[theorem]{Example}
\newtheorem{definition}[theorem]{Definition}
\newtheorem{conj}{Conjecture}[section]
\theoremstyle{definition}
\newcommand{\Sym}{\mathrm{Sym}}
\definecolor{mybluegreen}{rgb}{0.1, 0.55, 0.35}
\newcolumntype{?}{!{\vrule width 2pt}}
\newcolumntype{M}[1]{>{\centering\arraybackslash}m{#1}}
\newcolumntype{P}[1]{>{\centering\arraybackslash}p{#1}}
\newcolumntype{N}{@{}m{0pt}@{}}
\title[EKR Theorem for set-wise $2$ and $3$-intersecting families of perfect matchings]{An extension of the Erd\H{o}s-Ko-Rado theorem to set-wise $2$-intersecting families of perfect matchings}
\author[M.~N.~Shirazi]{Mahsa N. Shirazi} \email[M.~N.~Shirazi]{mahsa.nasrollahi@gmail.com}
\address{Department of Mathematics and Statistics, University of Regina, Regina, SK, S4S 0A2,
Canada}
\date{\today}
\keywords{Erd\H{o}s-Ko-Rado Theorem, Perfect matchings, Association scheme, Ratio bound, Clique, Coclique, Quotient graphs, Character table}
\subjclass[2010]{05E30, 05C50, 05C25}
\begin{document}


\begin{abstract}  
Two perfect matchings $P$ and $Q$ of the complete graph on $2k$ vertices are said to be set-wise $t$-intersecting if there exist edges $P_{1}, \cdots, P_{t}$ in $P$ and $Q_{1}, \cdots, Q_{t}$ in $Q$ such that the union of edges $P_{1}, \cdots, P_{t}$ has the same set of vertices as the union of $Q_{1}, \cdots, Q_{t}$ has. In this paper we prove an extension of the famous Erd\H{o}s-Ko-Rado (EKR) theorem to set-wise $2$-intersecting families of perfect matching on all values of $k$, and we conjecture similar statement for all $t\geq 2$.
\end{abstract}

\maketitle
\section{Introduction and Preliminaries}\label{sec:background}

The Erd\H{o}s-Ko-Rado (EKR) theorem states that if $\mathcal{F}$ is a $t$-intersecting family of $k$-subsets of $\{1,2,\ldots, n\}$, then $\binom{n-t}{k-t}$ is a tight upper bound on the size of $\mathcal{F}$ with $n$ sufficiently large~\cite{EKR.1961}. This famous theorem, proved in 1961, has been extended and modified into many versions and extensions to different mathematical objects such as permutations \cite{EKRpermGM, EKRpermK}, uniform set-partitions \cite{MM, KMB}, $t$-designs  \cite{tDesign}, vector spaces \cite{EKRvecSpace}, and perfect matchings \cite{2int, GMP, L}.\\

The focus of this work is on perfect matchings.  A \textsl{perfect matching} in a graph $G$, is a set of edges by which every vertex is covered exactly once. The number of perfect matching in the complete graph $K_{2k}$ is 
\begin{equation*}
\frac{1}{k!}\binom{2k}{2}\binom{2k-2}{2}\cdots \binom{2}{2} = (2k-1)(2k-3)(2k-5)\cdots 1.
\end{equation*}
For any positive integer $k$ define 
\begin{equation*}
(2k-1)!!:=(2k-1)(2k-3)(2k-5)\cdots 1,
\end{equation*}
Therefore, the number of perfect matchings in $K_{2k}$ is $(2k-1)!!$. Two perfect matchings are said to be \textsl{$t$-intersecting} if they have at least $t$ edges in common. In \cite{2int}, using representation theory, particular properties of association schemes, and eigenvalue techniques Fallat, Meagher, and Shirazi proved the following theorem:
\begin{theorem}\cite{2int}
The size of the largest set of $2$-intersecting perfect matchings in the complete graph $K_{2k}$ with $k\geq 3$ is $(2k-5)!!$. Further, if $S$ is a set of $2$-intersecting perfect matchings its the characteristic vector is a linear combination of the characteristic vectors of the canonically 2-intersecting sets of perfect matchings.
\end{theorem}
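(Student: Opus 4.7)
The plan is to cast the problem as a coclique problem in an appropriate graph, use the perfect-matching association scheme to diagonalize it, and finish by invoking the Delsarte--Hoffman ratio bound together with its equality case. Let $\Gamma$ be the graph on the $(2k-1)!!$ perfect matchings of $K_{2k}$, with $P\sim Q$ iff $|P\cap Q|\leq 1$. Then $2$-intersecting families are precisely cocliques in $\Gamma$, and for any fixed pair of disjoint edges $e_1,e_2$, the canonical family of all perfect matchings containing $\{e_1,e_2\}$ is a coclique of size $(2k-5)!!$. So the task is to prove $\alpha(\Gamma)\leq (2k-5)!!$ with the right equality characterization.

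The key structural input is that $\Gamma$ is a union of classes in the perfect matching association scheme, whose relations are indexed by partitions $\lambda\vdash k$ recording the cycle type of the multigraph $P\cup Q$: two matchings are in relation $\lambda$ iff $P\cup Q$ is a disjoint union of even cycles of lengths $2\lambda_1,2\lambda_2,\dots$ In particular, $|P\cap Q|$ equals the number of parts of $\lambda$ equal to $1$, so $\Gamma$ is the union of those relations where $\lambda$ has at most one part equal to $1$. This scheme arises from the Gelfand pair $(S_{2k},H_k)$, where $H_k$ is the hyperoctahedral group, and its primitive idempotents are indexed by partitions $2\mu$ with $\mu\vdash k$, corresponding to the Specht modules $S^{2\mu}$ appearing in the permutation representation on $S_{2k}/H_k$. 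I would compute the eigenvalues of $\Gamma$ on each isotypic component using the zonal spherical functions, which can be evaluated via the recurrences for characters of $S_{2k}$ on elements of given cycle type.

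With the eigenvalues in hand, I would identify the least eigenvalue $\tau$ and the valency $d$, and verify that the ratio bound
\[
\alpha(\Gamma)\leq \frac{(2k-1)!!}{1-d/\tau}
\]
evaluates to $(2k-5)!!$. The main obstacle will be precisely this step: the character table of the perfect-matching scheme is intricate, and one needs to show that the minimum eigenvalue is attained on the module $S^{2\mu}$ where $\mu=(k-2,1,1)$ (or whichever module produces the correct canonical coclique indicator), and control the sign of the eigenvalues on the other modules so that none is more negative. A convenient route is to decompose $\Gamma$ as a sum of simpler graphs from the scheme whose eigenvalues are known, e.g.\ the ``$k$-cycle'' relation; alternatively one can use branching of characters from $S_{2k}$ to $S_{2k-2}\times H_1$ to reduce to the analogous problem in smaller $k$ and induct, bootstrapping from a small base case verified by direct computation.

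For the characterization statement, I would exploit the equality case of the ratio bound: when the bound is tight, the characteristic vector $v_S$ of any maximum coclique $S$ lies in the direct sum $U_0\oplus U_\tau$ of the all-ones eigenspace and the $\tau$-eigenspace of $\Gamma$. The canonical $2$-intersecting families yield characteristic vectors $v_{e_1,e_2}$ that are $S_{2k}$-translates of a single such vector, so they span an $S_{2k}$-submodule of $U_0\oplus U_\tau$. I would then show by computing dimensions (and using the irreducibility of the relevant Specht modules in the scheme decomposition) that this span is all of $U_0\oplus U_\tau$, whence $v_S$ is forced to be a linear combination of canonical indicators. The delicate point here is that a $\{0,1\}$-vector in the span of canonical indicators need not be itself an indicator of a canonical family, so ``linear combination'' is the sharpest statement one can make purely from spectral considerations; the theorem is stated exactly in that form.
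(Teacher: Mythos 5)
This theorem is quoted from \cite{2int}; the present paper does not reprove it, but it does describe the method used there, and your outline diverges from that method at the decisive step. Your framing --- maximum cocliques in a graph $\Gamma$ that is a union of classes of the perfect matching association scheme, eigenvalues computed module by module, the Delsarte--Hoffman bound plus its equality case --- is the right one. The gap is that you apply the ratio bound to the \emph{unweighted} adjacency matrix of $\Gamma$. For $2$-intersection this fails: there is no reason for the several nontrivial modules supporting the characteristic vectors of the canonical cocliques (here $[2k-2,2]$, $[2k-4,4]$, $[2k-4,2,2]$) to share a common least eigenvalue of $A(\Gamma)$, and the quantity $(2k-1)!!/(1-d/\tau)$ computed from the true valency and true least eigenvalue does not come out to $(2k-5)!!$. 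This is exactly why \cite{2int} (following Wilson \cite{W}, as the present paper emphasizes) replaces $A(\Gamma)$ by a weighted matrix $B=\sum_\lambda a_\lambda A_\lambda$ supported on the same relations, and solves a small linear system in the $a_\lambda$ so that the eigenvalue on each of those modules is forced to equal $-1$ while the row sum equals $\frac{(2k-1)!!}{(2k-5)!!}-1$. Your proposal contains no mechanism for choosing such weights, so the bound you would obtain is not the stated one.

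A second, smaller issue: even once the weights are fixed, the genuinely hard part is showing that every \emph{other} eigenvalue of the weighted matrix lies strictly between $-1$ and the row sum. You correctly flag this as the main obstacle, but the remedies you sketch (induction via branching, module-by-module sign control) are not what carries the argument. The mechanism in \cite{2int} --- mirrored in Theorem 3.3 of the present paper --- is a trace argument: one bounds $\sum_i m_i\theta_i^2$ by the diagonal of $B^2$, and then uses Rasala's lower bounds on the multiplicities $m(\lambda)$ (Theorems 3.1 and 3.2 here) to conclude $m_i\theta_i^2 < m_i$, hence $|\theta_i|<1$, for all remaining modules, with small $k$ checked against Srinivasan's computed character tables. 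Your plan for the characterization statement (equality in the ratio bound places $v_S$ in $U_0\oplus U_\tau$, and the canonical indicators span that space) is correct in outline and matches \cite{2int}.
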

In \cite{Ellis}, Ellis used the concept of set-wise intersecting families for  permutations. A potentially fruitful direction beyond Theorem 1.1 is to generalize the definition of intersection to set-sise intersection for perfect matchings; this is the motivation for this paper. In this work we prove an extension of the famous Erd\H{o}s-Ko-Rado theorem to set-wise $2$-intersecting families of perfect matchings for the complete graph $K_{2k}$. 
\begin{definition}
For $t\leq \lfloor \frac{k}{2} \rfloor$, two perfect matchings $P$ and $Q$ of a graph on $2k$ vertices are said to be \textsl{set-wise $t$-intersecting} if there exist edges $P_{1}, \ldots, P_{t}$ in $P$ and $Q_{1}, \ldots, Q_{t}$ in $Q$ such that the union of edges $P_{1}, \ldots, P_{t}$ has the same set of vertices as the union of $Q_{1}, \ldots, Q_{t}$. 
\end{definition}
Note that in this definition, set-wise $t$-intersection implies set-wise $(k-t)$-intersection; hence we only consider $t\leq \lfloor \frac{k}{2} \rfloor$. The following example depicts the difference between $2$-intersecting and set-wise $2$-intersecting perfect matchings in the complete graph $K_{2k}$. Every pair of $t$-intersecting perfect matchings is also set-wise $t$-intersecting perfect matching, though the converse is not true. \\
\begin{example}
In Figure \ref{fig:K8}, each set of colored egges with the same color forms a perfect matching in the complete graph $K_{2k}$. The orange perfect matching and the green perfect matching are both $2$-intersecting and set-wise $2$-intersecting, while the orange and blue perfect matchings are set-wise $2$-intersecting, but not $2$-intersecting.
\begin{figure}[H]
\includegraphics[scale=0.3]{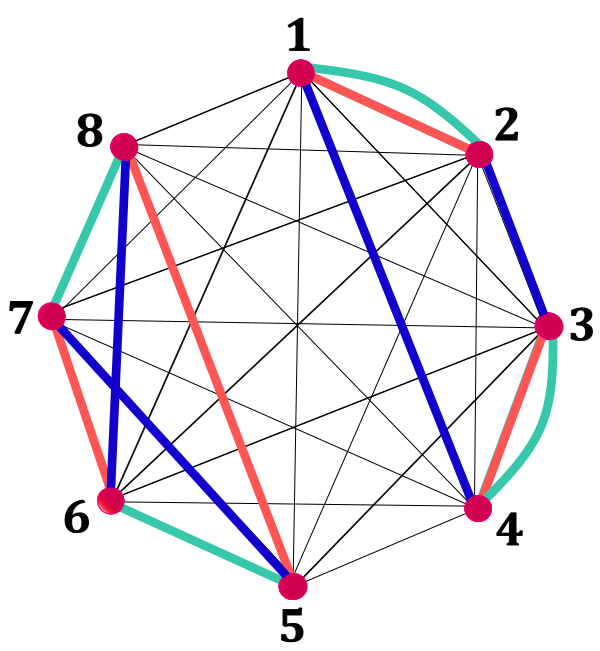}
\caption{$2$-intersecting perfect matchings vs set-wise $2$-intersecting perfect matchings in $K_{8}$\label{fig:K8}}
\end{figure}
\end{example}
Let $\mathcal{S}_{t}(2k)$ denote the set of perfect matchings in $K_{2k}$ in which the vertices $1,2,\dots, 2t$ are covered with exactly $t$ edges. Then, the set $\mathcal{S}_{t}(2k)$ forms a family of set-wise $t$-intersecting perfect matchings, and it is easy to check that the cardinality of this set is $(2t-1)!!(2k-2t-1)!!$. The set $\mathcal{S}_{t}(2k)$ is an orbit of the group $Sym(2t)\times Sym(2k-2t)$.  We propose the following conjecture
\begin{conj}
The size of the largest set of set-wise $t$-intersecting perfect matchings in the complete graph $K_{2k}$ with $k\geq 2t$ is $(2t-1)!!(2k-2t-1)!!$ and a set with maximum size is an orbit of the Young subgroup $Sym(2t)\times Sym(2k-2t)$.
\end{conj}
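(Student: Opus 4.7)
My plan is to formulate the problem as a maximum coclique problem in a derangement-like graph coming from the perfect matching association scheme, and then attack it with the ratio (Hoffman) bound. Let $\Gamma_{t}(2k)$ be the graph with vertex set all $(2k-1)!!$ perfect matchings of $K_{2k}$, and with $P\sim Q$ if and only if $P$ and $Q$ are \emph{not} set-wise $t$-intersecting. A set of pairwise set-wise $t$-intersecting perfect matchings is exactly a coclique in $\Gamma_{t}(2k)$, so the conjectured bound $(2t-1)!!(2k-2t-1)!!$ is the conjectured independence number.

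The first key step is to observe that adjacency in $\Gamma_{t}(2k)$ depends only on the cycle type of $P\cup Q$. Writing the components of $P\cup Q$ as alternating even cycles of half-lengths $c_{1},\ldots,c_{r}$ (so $c_{1}+\cdots+c_{r}=k$), a direct argument shows that any common perfectly-matched vertex set of size $2t$ must be a union of components; hence $P$ and $Q$ are set-wise $t$-intersecting if and only if some sub-sum of $(c_{1},\ldots,c_{r})$ equals $t$. Consequently $\Gamma_{t}(2k)$ is a union of classes of the perfect matching association scheme on $K_{2k}$: exactly those classes indexed by partitions $\lambda\vdash k$ no sub-sum of which equals $t$. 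Its eigenvalues are the corresponding sums of entries of the character table of this scheme, i.e., of the zonal spherical functions of the Gelfand pair $(\Sym(2k),\Sym(2)\wr\Sym(k))$, indexed by even partitions of $2k$.

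With this structure in hand I would run the standard algebraic program. First, compute the smallest eigenvalue $\tau$ of $\Gamma_{t}(2k)$ and check that the ratio bound $|V(\Gamma_{t}(2k))|\cdot(-\tau)/(d-\tau)$, where $d$ is the valency, evaluates to $(2t-1)!!(2k-2t-1)!!$; equality must come from the canonical family $\mathcal{S}_{t}(2k)$, since that family is already a coclique of this size. Once the bound is tight, uniqueness of the extremal cocliques up to the $\Sym(2k)$-action should follow by identifying the $\tau$-eigenspace of $\Gamma_{t}(2k)$ with the span of the characteristic vectors of the $\Sym(2t)\times\Sym(2k-2t)$-orbits of the form $\mathcal{S}_{t}(2k)$, using the isotypic decomposition of the permutation module of $\Sym(2k)$ on perfect matchings, in the same spirit as the proof of Theorem 1.1.

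The hard part will be the eigenvalue analysis. For general $t$ one must minimize over all even partitions $\mu\vdash 2k$ a specific signed sum of zonal spherical functions indexed by the combinatorially defined set of $\lambda\vdash k$ avoiding sub-sum $t$. I expect the minimum to be attained on the isotypic component labeled by the partition $[2k-2t,2t]$, the natural analogue of the eigenspace governing the $t=2$ case, but proving this uniformly in $k$ — and ruling out that some other even partition of $2k$ eventually overtakes it — appears to require either closed-form evaluations of the relevant zonal polynomials (e.g.\ via Jack or Schur $Q$-function identities), or a recursive argument that reduces $t$ by peeling off a $2$-subset. This is exactly the step where the general conjecture seems substantially harder than the $t=2$ case established in the paper.
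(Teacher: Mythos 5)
The statement you are addressing is stated in the paper only as a conjecture: the paper proves it for $t=2$ (via Theorem~\ref{LeastEvalTrick2t}) and offers a conjectural weighting for $t=3$, but gives no proof for general $t$. Your proposal is a program rather than a proof, and you say so yourself; the program is essentially the paper's own: model set-wise $t$-intersecting families as cocliques of a graph $N_{t}(2k)$ that is a union of classes $A_{\lambda}$ of the perfect matching association scheme (your sub-sum criterion on the half-lengths of the cycles of $P\cup Q$ is exactly the paper's condition that no sub-partition of the shape of $P\cup Q$ is a partition of $2t$), then apply the Delsarte--Hoffman bound and read off the extremal eigenspace. So far this matches the paper.

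The concrete gap is in the middle step. You propose to compute the least eigenvalue of the \emph{unweighted} graph $\Gamma_{t}(2k)$ and hope the ratio bound is tight. That is not what the paper does, and for good reason: already for $t=2$ the tightness is achieved only for a \emph{weighted} adjacency matrix $\hat{B}_{t}(2k)=\sum_{\lambda}a_{\lambda}A_{\lambda}$ supported on a small number of classes, with the coefficients $a_{\lambda}$ chosen by solving a linear system that forces the eigenvalues on the modules $[2k-2,2],[2k-4,4],\dots,[2k-2t,2t]$ to equal $-1$ simultaneously while the valency equals $\frac{(2k-1)!!}{(2t-1)!!(2k-2t-1)!!}-1$. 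This is Wilson's device, and without it there is no reason to expect $\frac{|V|}{1-d/\tau}$ to equal $(2t-1)!!(2k-2t-1)!!$; your prediction that the minimum is attained on the single module $[2k-2t,2t]$ is also at odds with the designed degeneracy across $t$ modules. The second, and decisive, gap is the one you flag: even after the correct weighting is set up, one must show every remaining eigenvalue lies strictly between $-1$ and the valency, which requires eigenvalue formulas for the classes $A_{\lambda}$ on modules not currently available in closed form for general $k$ (the paper had to compute new columns of the character table via quotient graphs just to handle $[2k-4,2,2]$, and still cannot close the $t=3$ case for the modules $[2k-6,4,2]$ and $[2k-6,2,2,2]$). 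Finally, tightness of the ratio bound only places $\nu_{S}-\frac{|S|}{|V|}\mathbf{1}$ in the $(-1)$-eigenspace; deducing that every maximum coclique is an orbit of $\Sym(2t)\times\Sym(2k-2t)$ requires an additional module-theoretic argument that neither you nor the paper supplies for general $t$.
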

In this paper we are going to prove the aforementioned conjecture for $t$ being 2. First, in Section \ref{sec:PM Asso}, we provide some necessary background on the perfect matching association scheme, and its character table; we will calculate several eigenvalues in the character table for all values of $k\geq 7$. We also, define a graph $N_{t}(2k)$ for which finding the size of the largest cocliques is equivalent to finding the size largest set of set-wise $t$-intersecting perfect matchings. The eigenvalues of this graph can be derived from the eigenvalues of the graphs in the perfect matching association scheme. Using eigenvalue techniques, in Section \ref{sec:EKR} we construct a weighted adjacency matrix for the graph $N_{2}(2k)$ and we show that for this matrix, the ratio bound (see Theorem \ref{RatioBound}) holds with equality. A similar approach is used in \cite{2int} which was inspired by the eigenvalue techniques in \cite{GMP} and \cite{W}. In Section \ref{sec:set-wise3int} we construct a weighted adjacency matrix for for the graph $N_{3}(2k)$, and conjecture that the ratio bound holds with equality for this matrix. We conclude this paper with some open problems and possible future directions. In the appendix we provide the diagonals of several adjacency matrices for some graphs in the perfect matching association scheme which were used to calculate the desired eigenvalues in their general forms.

\section{Perfect Matching Association Scheme}\label{sec:PM Asso}

In this section, we briefly introduce the perfect matching association scheme. For more details see \cite[Section 2.2]{2int} and \cite[Sections 3, 15.4]{GMB}. We denote an even integer partition of $2k$ by $\lambda = [\lambda_{1}, \lambda_{2},\dots, \lambda_{\ell}]$, where $2k=\sum_{i=1}^{\ell}\lambda_{i}$ and for all $1\leq i\leq \ell$, $\lambda_{i}$ is an even integer, and $\lambda_{i}\geq \lambda_{i+1}$, which is denoted by $\lambda \vdash 2k$. Taking the union of two perfect matchings in $K_{2k}$ produces disjoint even cycles and the length if these cycles construct an even partition of $2k$, called the \textit{shape} of their union. Note that the union of parallel edges is a cycle of length $2$.\\

For an even integer partition $\lambda$ on the set $\{1,\cdots, 2k \}$, define a matrix $A_{\lambda}$ in which rows and columns are indexed by perfect matchings of $K_{2k}$. In $A_{\lambda}$ the $(P,Q)$-entry is 1 if the union of the perfect matchings $P$ and $Q$ has shape $\lambda$, and 0 otherwise. Then the set $\mathcal{A}=\{ A_{\lambda} \, | \, \lambda \vdash 2k\}$ forms a symmetric association scheme which is known as the \textsl{perfect matching association scheme}~\cite[Section 15.4]{GMB}, and the set $\mathcal{A}$ forms an orthogonal basis of the matrix algebra $\mathbb{C}[\mathcal{A}]$ of the matrices in $\mathcal{A}$, called the \textsl{Bose-Mesner algebra} of this association scheme. The matrices $A_{\lambda_{i}}$ are symmetric and commutative, therefore, they are simultaneously diagonalizable.\\

The symmetric group $\Sym(2k)$ acts transitively on the set of perfect matchings of the complete graph $K_{2k}$, and the wreath product $\Sym(2) \wr \Sym(k)$ is isomorphic to the group that stabilizes a perfect matching. So the action of $\Sym(2k)$ on perfect matchings is equivalent to its action on the cosets of $\Sym(2k) / \left( \Sym(2) \wr \Sym(k) \right)$. Let $V$ be the vector space of the vectors indexed by perfect matchings of $K_{2k}$. Then $V$ corresponds to a $\Sym(2k)$-module. This module can be expressed as sum of irreducible representations of $\Sym(2k)$-modules. It is known that all irreducible representations of $\Sym(2k)$ correspond partitions and all representation modules correspond to even integer partitions of $2k$; therefore, $V$ is the sum of irreducible $\Sym(2k)$-modules corresponding to even integer partitions of $2k$. Thus, the common eigenspaces in the perfect matching association scheme correspond to even integer partitions. This means that both classes and the common eigenspace in this association scheme correspond to even integer partitions of $2k$.

\subsection{Perfect Matching Derangement Graph and its Eigenvalues}

In this part, we first introduce some basic definitions and notations, and then define the perfect matching derangement graph $N_{t}(2k)$ for which; later in Section \ref{sec:EKR}; we show the ratio bound holds with equality for $t$ being $2$.\\

A clique in a graph $X$ is a set of vertices such that any two vertices are adjacent, and a coclique is a set of vertices in which no two vertices are adjacent. We denote the size of a maximum clique and a maximum coclique of the graph $X$ by $w(X)$ and $\alpha(X)$, respectively. The adjacency matrix $A(X)$ of the graph $X$ is a matrix in which the rows and columns are indexed by vertices and $(i,j)$-entry is $1$ if $i\sim j$, and 0 otherwise. A \textsl{weighted adjacency matrix} $B(X)$ of the graph $X$ is a symmetric matrix in which the rows and columns are indexed by the vertices and the $(i, j)$-entry may be non-zero if $i\sim j$ and is 0 otherwise. The \textsl{eigenvalues} of $X$ refer to the eigenvalues of its (weighted) adjacency matrix. For a matrix $A_{\lambda}$ in the perfect matchings association scheme, define the graph $X_{\lambda}$ such that $A_{\lambda}$ is its adjacency matrix. So a graph $X$ in this scheme is a graph with $A(X)\in \mathbb{C}[\mathcal{A}]$. 
\begin{definition}
For $t\leq \lfloor \frac{k}{2} \rfloor$, define the graph $N_{t}(2k)$ to be the graph with perfect matchings of the complete graph $K_{2k}$ as its vertices. In this graph, two perfect matchings are adjacent if there is no partition of $2t$ as a sub-partiton of their shape.
\end{definition}
For example, in $N_{3}(2k)$, two perfect matchings on $2k$ vertices are adjacent if there is no 6-cycle, no 4 and a 2-cycle, no  three 2-cycles in their intersection. Denote the adjacency matrix of $N_{t}(2k)$ by $A_{t}(2k)$. Clearly, the graph $N_{t}(2k)$ is a graph in the perfect matching association scheme and we have that
\begin{equation}\label{eq:AdjMat}
A_{t}(2k) = \sum_{\lambda \vdash 2k}A_{\lambda},
\end{equation}
where partitions $\lambda$ contain no partition of $[2t]$ as as a sub-partition. So the eigenvalues of $N_{t}(2k)$ are the sum of the eigenvalues of the matrices $A_{\lambda}$ in (\ref{eq:AdjMat}). Similarly, a weighted adjacency matrix $B_{t}(2k)$ of the graph $N_{t}(2k)$ is in $\mathbb{C}[\mathcal{A}]$,
\begin{equation}\label{eq:W.AdjMat}
B_{t}(2k) = \sum_{\lambda \vdash 2k}a_{\lambda}A_{\lambda},
\end{equation}
where $A_{\lambda}\in \mathcal{A}$, partitions $\lambda$ have no partition of $[2t]$ as a sub-partition and $a_{\lambda} \in \mathbb{C}$. The eigenvalue $\theta_{\mu}$ of $B_{t}(2k)$ belonging to the module $\mu$ is
\[
\theta_{\mu} = \sum_{\lambda \vdash 2k}a_{\lambda,}\theta_{\lambda}^{\mu},
\]
where, $\theta_{\lambda}^{\mu}$ is the eigenvalue of $A_{\lambda}\in \mathcal{A}$ corresponding to the $\mu$-module; and coefficients $a_{\lambda}$ are the same as the in (\ref{eq:W.AdjMat}). A coclique in the graph $N_{t}(2k)$ is a set of perfect matchings for which every two have a partiton of $[2t]$ in their intersection, in other words they are set-wise $t$-intersecting. So finding the largest size of a set-wise $t$-intersecting perfect matching on $2k$ vertices is equivalent to finding $\alpha(N_{t}(2k))$. The set $\mathcal{S}_{t}(2k)$ defined in Section \ref{sec:background} is a coclique in $N_{t}(2k)$ of size $(2t-1)!!(2k-2t-1)!!$. For $t=2$, our goal is to show that $\mathcal{S}_{t}(2k)$ is actually a maximum coclique in $N_{t}(2k)$, and so represents a family of set-wise $t$-intersecting perfect matching of the maximum size. For an arbitrary graph $X$, finding $\alpha(X)$ is an NP-hard problem, but the following theorem presents a classical tight upper bound for $\alpha(X)$.
\begin{theorem}[Delsarte-Hoffman bound]\cite[p. 31]{GMB}\label{ratioBound}
Let $B(X)$ be a weighted adjacency matrix for a graph $X$ on vertex set $V(X)$. If $B(X)$ has constant row sum $d$ and least eigenvalue $\tau$, then 
\begin{equation*}\label{RatioBound}
\alpha(X)\leq \frac{|V(X)|}{1-\frac{d}{\tau}}.
\end{equation*}
If equality holds for some coclique $S$ with characteristic vector $\nu_{S}$, then 
\begin{equation*}
\nu_{S}-\frac{|S|}{|V(X)|}\mathbf{1}
\end{equation*}
is an eigenvector with eigenvalue $\tau$.
\end{theorem}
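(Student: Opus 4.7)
The plan is to carry out the standard linear-algebra argument behind Delsarte--Hoffman, using two inputs: $B(X)$ is symmetric, and its constant row sum $d$ makes $\mathbf{1}$ a $d$-eigenvector, so an arbitrary vector can be split into its $\mathbf{1}$-component and a part orthogonal to $\mathbf{1}$ on which the least-eigenvalue Rayleigh bound applies.

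First I would fix a coclique $S$ with characteristic vector $\nu_{S}$ and compute the quadratic form $\nu_{S}^{T} B(X)\, \nu_{S}$. Each term of the double sum requires $i, j \in S$, but the $(i,j)$-entry of $B(X)$ vanishes on non-edges (in particular on the diagonal), and no edge has both endpoints in the coclique $S$. Hence $\nu_{S}^{T} B(X)\, \nu_{S} = 0$.

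Next I would decompose $\nu_{S} = \tfrac{|S|}{|V(X)|}\mathbf{1} + w$ with $w \perp \mathbf{1}$. Using $B(X)\mathbf{1} = d\mathbf{1}$, the cross term $\mathbf{1}^{T} B(X)\, w$ vanishes and the form splits as
\[
0 \;=\; \frac{|S|^{2} d}{|V(X)|} \;+\; w^{T} B(X)\, w.
\]
The Rayleigh bound gives $w^{T} B(X)\, w \geq \tau \|w\|^{2}$, while a direct computation yields $\|w\|^{2} = \|\nu_{S}\|^{2} - \tfrac{|S|^{2}}{|V(X)|} = |S| - \tfrac{|S|^{2}}{|V(X)|}$. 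Substituting these into the displayed identity and rearranging, with careful tracking of the sign of $\tau$ (necessarily negative once $X$ has at least one edge), produces exactly $|S| \leq |V(X)|/(1 - d/\tau)$.

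Finally, the equality statement falls out from the equality case of the Rayleigh quotient: if $|S|$ attains the bound then $w^{T} B(X)\, w = \tau \|w\|^{2}$, which forces $w$ to lie in the $\tau$-eigenspace of $B(X)$; this is precisely the assertion that $\nu_{S} - \tfrac{|S|}{|V(X)|}\mathbf{1}$ is a $\tau$-eigenvector. Since this is a classical result of Delsarte and Hoffman, there is no genuine obstacle; the only small subtlety is to keep the sign of $\tau$ straight when inverting the inequality in the final rearrangement.
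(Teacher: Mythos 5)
Your argument is correct and is the standard proof of the ratio bound; the paper itself does not prove this statement but simply cites it from Godsil--Meagher, where essentially the same decomposition $\nu_{S}=\frac{|S|}{|V(X)|}\mathbf{1}+w$ with the Rayleigh bound on $w$ is used. The only point worth making explicit is that the final rearrangement requires $\tau<0$ and $d-\tau>0$, which holds as soon as $B(X)$ is nonzero (its trace is zero), exactly as you note.
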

This bound is also called the \textsl{ratio} bound. In the remainder if this paper we define the coefficients $a_{\lambda}$ in (\ref{eq:W.AdjMat}), for which the ratio bound holds with equality for $B_{t}(2k)$, and hence for $N_{t}(2k)$; when $t =2$. This nice approach first was developed by Wilson in \cite{W}. Later it was used in \cite{2int, GMP, L} to prove extensions of the EKR theorem to $t$-intersecting families of perfect matchings. Therefore, our approach is to show that
$1-\frac{d}{\tau}=\frac{\mid V(N_{t}(2k)) \mid}{\mid \mathcal{S}_{t}(2k) \mid}$. To verify this, we need to find the row sum and the least eigenvalue of the matrix $B_{t}(2k)$.
\subsection{The Character Table of the Perfect Matching Association Scheme}

As we mentioned previously, if we have the eigenvalues of matrices $A_{\lambda}$ in (\ref{eq:W.AdjMat}), then we can derive the eigenvalues of $B_{t}(2k)$. However, we don't have the complete character table for the perfect matching association scheme when $k >40$, (see \cite{Mu, Sri1,Sri2} for more details). A nice approach to find part of the character table for general $k$ is to use the concept of \textsl{quotient graphs}. The set partition $\mu = [\mu_{1},\mu_{2},\dots, \mu_{i}]$ of the vertices in a graph $X$ is said to be \textsl{equitable} if for any vertex in $\mu_{k}$ the number of adjacent vertices in $\mu_{j}$ is only determined by $k$ and $j$. The quotient graph $X/ \mu$, for an equitable partition $\mu$ in $X$ is a directed multi-graph in which vertices are parts of $\mu$. $X/ \mu$ has $n$ arcs from $\mu_{j}$ to $\mu_{k}$ if a vertex in $\mu_{j}$ has $n$ neighbours in $\mu_{k}$\cite[Section 2.2]{GMB}. Let $\lambda = [\lambda_{1}, \lambda_{2},\dots, \lambda_{i}]$ be an integer partition. The Young subgroup $\Sym(\lambda) = \Sym(\lambda_{1}) \times \Sym(\lambda_{2}) \times \cdots \times \Sym(\lambda_{i})$ acts on the set of all perfect matchings of $K_{2k}$ and produces an orbit partition on the set of perfect matchings, which is equitable. This means that the quotient graph $X_{\mu} / \lambda$ for the class $\mu$ in the perfect matching association scheme is well-defined; and the eigenvalues of $X_{\mu} / \lambda$ are eigenvalues of $X_{\mu}$. For a complete explanation and more details please see \cite[Section 4]{2int}.\\

In \cite{2int}, we used the Young subgroups to form the quotient  graphs of several matrices $A_{\lambda}$ in $\mathcal{A}$; for the classes $[2k]$, $[2k-2,2]$, $[2k-4,4]$, and $[2k-6,6]$. Hence, we constructed a portion of the character table, by which we were able to calculate the least  and the greatest eigenvalues of the desired weighted adjacency matrix corresponding to the derangement graph defined for $2$-intersecting perfect matchings.
\begin{theorem}\cite[p.583]{2int}\label{modules}
Assume that $\Sym(n)$ acts on the set $\Omega$, and that $A$ is the adjacency matrix for an orbital of the action of $\Sym(n)$ on $\Omega$. 
Let $\lambda \vdash n$ and $\pi$ be the orbit partition from the action of $\Sym(\lambda)$ on $\Omega$. If $\eta$ is an eigenvalue of the quotient graph $A/\pi$, then $\eta$ is an eigenvalue of $A$. Moreover, $\eta$ belongs to some $\Sym(n)$-module represented by the partition $\mu$ where $\mu \geq \lambda$ in the dominance ordering.
\end{theorem}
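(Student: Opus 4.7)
The plan is to combine the standard fact that eigenvalues of quotient matrices under equitable partitions lift to eigenvalues of the original matrix with a representation-theoretic argument based on Young's rule.

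First, I would observe that because $A$ is the adjacency matrix of an orbital of $\Sym(n)$ on $\Omega$, it commutes with the permutation action of every subgroup of $\Sym(n)$; in particular $\Sym(\lambda)$ preserves adjacency. Consequently, the $\Sym(\lambda)$-orbit partition $\pi$ is equitable for $A$. Let $P$ be the $|\Omega|\times|\pi|$ $0/1$ characteristic matrix whose columns are the indicator vectors of the parts of $\pi$; equitability is precisely the identity $AP = P\bar{A}$ where $\bar A = A/\pi$. Hence, if $\bar A v = \eta v$ with $v \ne 0$, then $A(Pv) = \eta (Pv)$, and since the columns of $P$ have pairwise disjoint supports they are linearly independent, so $Pv \ne 0$. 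This already gives the first assertion.

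Second, to locate $\eta$ in a $\Sym(n)$-module, I would identify the image of $P$ inside $\mathbb{C}\Omega$ with the subspace $\mathbb{C}\Omega^{\Sym(\lambda)}$ of vectors fixed pointwise by $\Sym(\lambda)$. By Frobenius reciprocity, the multiplicity of the irreducible $\Sym(n)$-module $S^\mu$ in $\mathbb{C}\Omega^{\Sym(\lambda)}$ equals the multiplicity of the trivial $\Sym(\lambda)$-representation in the restriction of $S^\mu$ to $\Sym(\lambda)$; Young's rule identifies this with the Kostka number $K_{\mu\lambda}$, which is positive if and only if $\mu \trianglerighteq \lambda$ in the dominance order. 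Therefore
\[
\mathbb{C}\Omega^{\Sym(\lambda)} \;=\; \bigoplus_{\mu \trianglerighteq \lambda} W_\mu,
\]
where $W_\mu$ denotes the $S^\mu$-isotypic component of $\mathbb{C}\Omega^{\Sym(\lambda)}$.

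To conclude, since $A$ commutes with $\Sym(n)$, every eigenspace of $A$ is a $\Sym(n)$-submodule of $\mathbb{C}\Omega$ and therefore a direct sum of isotypic components. Decomposing the nonzero vector $Pv \in \mathbb{C}\Omega^{\Sym(\lambda)}$ as $\sum_{\mu\trianglerighteq\lambda} w_\mu$ with $w_\mu \in W_\mu$, at least one $w_\mu$ is nonzero, and that $w_\mu$ is an eigenvector of $A$ with eigenvalue $\eta$ living inside the $\mu$-isotypic component of $\mathbb{C}\Omega$. The main obstacle is purely the representation-theoretic half: one must invoke Young's rule and the classical characterisation of positivity of Kostka numbers via the dominance order. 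Once these facts are granted, the argument is routine linear algebra of equitable partitions together with the observation that eigenspaces of $\Sym(n)$-equivariant matrices respect the isotypic decomposition.
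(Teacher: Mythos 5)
Your argument is correct and is essentially the proof given in the cited source \cite{2int} (the paper itself states this theorem without proof): equitable-partition lifting via $AP = P\bar{A}$, followed by identifying the column space of $P$ with $\mathbb{C}\Omega^{\Sym(\lambda)}$ and invoking Young's rule together with the positivity of Kostka numbers $K_{\mu\lambda}$ exactly for $\mu\trianglerighteq\lambda$. The only phrasing worth tightening is the final step: rather than saying eigenspaces are sums of isotypic components, note that $A$ preserves each isotypic component $U_{\mu}$ of $\mathbb{C}\Omega$ (being in the commutant), so projecting the eigenvalue equation for $Pv$ onto each $U_{\mu}$ yields $Aw_{\mu}=\eta w_{\mu}$ componentwise.
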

In this work, we expand these results by calculating the diagonals of several other quotient graphs for the class $[2k-4,2,2]$, corresponding to the young subgroups $\Sym(2k-2)\times \Sym(2)$, $\Sym(2k-4)\times \Sym(4)$, and $\Sym(2k-6)\times \Sym(6)$.\\

Consider the matrix $A_{[2k-4,2,2]}$ in the perfect matching association scheme. In the graph $X_{[2k-4,2,2]}$, two perfect matchings are adjacent if their union forms a 4-cycle and two 2-cycles. Denote the quotient graph of $X_{[2k-4,2,2]}$ corresponding to the group $\Sym(2k-2)\times \Sym(2)$ by $X_{[2k-4,2,2]}/[2k-2,2]$. The adjacency matrix of $X_{[2k-4,2,2]}/[2k-2,2]$ is a $2\times 2$ matrix (Please see the Table \ref{[2k-4,2,2]/[2k-2,2]} in the Appendix \ref{Sec:Appendix1}). The all-ones vector $\bf{1}$ is an eigenvector of this matrix corresponding to the largest eigenvalue, 
$\frac{(2k)!!}{8(2k-4)}$. This eigenvalue is the degree of $A_{[2k-4,2,2]}$, and by Theorem~\ref{modules}, this eigenvalue corresponds to the $[2k]$-module in the character table. It is well-known that the trace of a matrix is equal to the sum of its eigenvalues, so by subtracting the degree from the trace, we find the second eigenvalue of this matrix which is $\frac{(3k-2)}{4}(2k-6)!!$. Using Theorem~\ref{modules}, and noting that the degree eigenvalue belongs to the $[2k]$-module, we deduce that the second eigenvalue belongs to the $[2k-2,2]$-module. Using the same argument for the graphs $X_{[2k-4,2,2]}/[2k-4,4]$, $X_{[2k-4,2,2]}/[2k-4,2,2]$, and  $X_{[2k-4,2,2]}/[2k-6,6]$ respectively, we calculate the eignvalues of the class $[2k-4,2,2]$ for the modules $[2k-2,2]$, $[2k-4,4]$, $[2k-4,2,2]$, and $[2k-6,6]$. The updated character table of the perfect matchings association scheme and some quotient graphs of the class $[2k-4,2,2]$, please see Appendix \ref{Sec:Appendix1}. In table \ref{PartOfCharTable}, the column corresponds to the eigenvalues for the class $[2k-4,2,2]$; and rows correspond to the modules.\\

\FloatBarrier
\begin{center}
\begin{tabular}[h]{N | c | c |}
\hline
\rule{0pt}{20pt} & ~ & $[2k-4,2,2]$ \\
\Xhline{6\arrayrulewidth}
\rule{0pt}{20pt}& $\chi_{[2k]}$ & $\frac{(2k)!!}{8(2k-4)}$ \\
\hline
 
\rule{0pt}{20pt}& $\chi_{[2k-2,2]}$ & \textcolor{RubineRed}{$\frac{(3k-2)(2k-6)!!}{4}$} \\
  
\rule{0pt}{20pt}& $\chi_{[2k-4,4]}$ & \textcolor{RubineRed}{$\frac{-(k+3)(2k-8)!!}{4}$} \\
 
\rule{0pt}{20pt}& $\chi_{[2k-4,2,2]}$ & \textcolor{RubineRed}{$(k^{2}-7k+12)(2k-10)!!$} \\

\rule{0pt}{20pt}& $\chi_{[2k-6,6]}$ & \textcolor{RubineRed}{$\frac{-3}{2}(13k^{2}-101k+190)(2k-12)!!$} \\

\rule{0pt}{20pt}&  \vdots & \vdots \\
\hline
\end{tabular}
\vspace*{0.2cm}
\captionof{table}{$[2k-4,2,2]$-column in the character table of the perfect matching association scheme}
\label{PartOfCharTable}
\end{center}
\FloatBarrier
\section{Families of Set-wise $2$-intersecting perfect matchings of the maximum size}\label{sec:EKR}
In this section, we determine an appropriate set of coefficients for $B_{2}(2k)$, so that the ratio bound holds with equality. In other words, for $t=2$, the set of all perfect matchings with fixed $2t$ elements in exactly $t$ edges are the maximum families of set-wise $t$-intersecting perfect matchings.  First, we need some background and results concerning the dimensions of the irreducible modules of $\Sym(n)$.\\

Suppose $\mu = [\mu_{1},\mu_{2},\dots, 
\mu_{f}]$ and $\lambda = [\lambda_{1}, \lambda_{2},\dots, \lambda_{g}]$ are two integer partitions of $2k$. If there exists $i\in \{1,2,\dots, min\{f,g\}\}$ such that $\mu_{i}>\lambda_{i}$ and for all $j<i$, $\mu_{j}=\lambda_{j}$, we say $\mu\geq \lambda$ in the \textsl{dominance} ordering. The \textsl{dual} partition of $\lambda$; presented by $\lambda^{*}$; is the partition for which the Young diagram is the reflection of the Young diagram in $\lambda$. It is known that the dimension of a $\lambda$-module and its dual's are the same; which is shown by $m(\lambda) = m(\lambda^{*})$. If $\lambda\geq \lambda^{*}$, then $\lambda$ is called \textsl{primary}.\cite{Ra}. The next two theorems provide bounds on the dimension of some modules.
\begin{theorem}\cite[p.163]{Ra}\label{F(n)Theorem}
Let $\lambda$ be a primary partition of $n$ for which the first part $\lambda_{1}< \lfloor \frac{n}{2} \rfloor$. Then $m(\lambda)\geq F(n)$, where
\[
F(n) =
\begin{cases}
  n\cdot F(n-1)(m+2)   &  \textrm{ if $n=2m+1$ is odd},\\
  2\cdot F(n-1)      & \textrm{ if $n$ is even},
\end{cases}
\]
with $F(0) = 2$. In particular, for $n\geq 8$,
\begin{equation}\label{F(n)}
\frac{3}{2}\cdot F(n-1)\leq F(n) \leq 2\cdot F(n-1).
\end{equation}
\end{theorem}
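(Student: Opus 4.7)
The plan is to argue by induction on $n$, using the branching rule for $\Sym(n)\downarrow \Sym(n-1)$: for any $\lambda \vdash n$,
\[
m(\lambda) = \sum_{\mu} m(\mu),
\]
where the sum runs over all $\mu \vdash n-1$ obtained from $\lambda$ by deleting one removable (corner) cell. This is the combinatorial engine driving the recurrence for $F$, since every term on the right can be controlled by the inductive hypothesis once we know that enough of the $\mu$'s again satisfy the primary condition and the first-part bound.

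First I would verify the base cases directly: for small $n$ (up to some $n_{0}$ at which the induction comfortably starts, say $n_{0} = 8$), the hook-length formula gives $m(\lambda)$ for every primary $\lambda \vdash n$ with $\lambda_{1} < \lfloor n/2 \rfloor$, and one checks $m(\lambda) \geq F(n)$ case by case. For the inductive step, given a primary $\lambda \vdash n$ with $\lambda_{1} < \lfloor n/2 \rfloor$, the task is to exhibit enough removable cells whose removal yields partitions $\mu \vdash n-1$ still satisfying both hypotheses --- primarity and $\mu_{1} < \lfloor (n-1)/2 \rfloor$ --- so that the inductive hypothesis applies to each. When $n$ is even the target $F(n) = 2 F(n-1)$ follows from producing two such corners and summing. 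When $n = 2m+1$ is odd, the sharper factor $(2m+1)/(m+2)$ is obtained by aggregating the contributions of more corners (essentially every corner not located on the first row or first column) and using the primary assumption $\lambda \geq \lambda^{*}$ together with the symmetry $m(\lambda) = m(\lambda^{*})$ to lower-bound each summand efficiently.

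The main obstacle will be the boundary case analysis: when $\lambda_{1}$ sits just below the threshold $\lfloor n/2 \rfloor$, when $\lambda$ is a near-staircase shape with very few removable corners, or when $\lambda$ is close to self-conjugate, a single corner deletion can violate either primarity or the first-row bound. Splitting on the flatness of the shape (whether $\lambda_{1} = \lambda_{2}$ and whether $\lambda_{1}^{*} = \lambda_{2}^{*}$), and invoking conjugation to swap the two constraints when necessary, should always furnish a valid collection of corners; the inductive hypothesis applied to the alternative partition then transfers back via $m(\lambda) = m(\lambda^{*})$. Once this combinatorial step is in hand, the asymptotic inequalities $\tfrac{3}{2} F(n-1) \leq F(n) \leq 2 F(n-1)$ for $n \geq 8$ are a routine check: the ratio $F(n)/F(n-1)$ equals $2$ when $n$ is even and $(2m+1)/(m+2)$ when $n = 2m+1$, and the latter lies in $[3/2,\,2)$ as soon as $m \geq 4$.
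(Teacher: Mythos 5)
The paper itself offers no proof of this statement: it is quoted directly from Rasala \cite{Ra}, so there is no internal argument to compare yours against. Evaluating your proposal on its own terms: you have correctly read through the typo in the displayed recurrence (the odd case must mean $F(2m+1)=\tfrac{2m+1}{m+2}F(2m)$, which is what makes $\tfrac32 F(n-1)\le F(n)\le 2F(n-1)$ consistent), and the final ``routine check'' of that inequality is indeed routine. But the inductive engine you propose does not close, and the obstruction is not merely a boundary nuisance to be handled by case analysis --- it is fatal for a whole family of partitions.

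Concretely, take $\lambda=[3,3,3]\vdash 9$. It is self-conjugate, hence primary, and $\lambda_1=3<\lfloor 9/2\rfloor=4$, so it lies squarely in the scope of the theorem. A rectangle has exactly one removable corner, so the branching rule gives $m([3,3,3])=m([3,3,2])$ with no multiplicative gain whatsoever; the inductive hypothesis applied to $[3,3,2]$ yields only $m([3,3,3])\ge F(8)=28$, while the target is $F(9)=\tfrac{9}{6}F(8)=42$. (By the hook length formula $m([3,3,3])=m([3,3,2])=42$, so the theorem holds here \emph{with equality} --- meaning no induction that loses any slack at this step can possibly succeed, and conjugation cannot rescue you since $\lambda=\lambda^{*}$.) The same problem recurs for every square shape $[a^{a}]$ with $a\ge 3$, and more generally whenever the number of ``good'' corners times $F(n-1)$ falls short of $F(n)$. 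Rasala's actual argument is a multi-page analysis built on explicit degree comparisons (first-row and first-column removal, classification of partitions by depth), not a one-step corner-removal induction; your sketch identifies a plausible-looking tool but the key quantitative step it relies on is false as stated, so this is a genuine gap rather than an alternative proof.
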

\begin{theorem}\cite[p.151]{Ra}\label{RaTheorem}
Let $\lambda = [ \lambda_{1},\lambda_{2},\cdots, \lambda_{t}]$ be an integer partition of $2k$ in which $\lambda_{1} \geq k$. Then,
\begin{equation*}
m([\lambda_{1}, 2k-\lambda_{1}])\leq m(\lambda).
\end{equation*}
\end{theorem}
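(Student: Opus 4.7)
The plan is to use the hook length formula
\[
m(\lambda) = \frac{(2k)!}{\prod_{c \in \lambda} h_{\lambda}(c)}
\]
to convert the desired inequality $m([\lambda_{1}, 2k - \lambda_{1}]) \leq m(\lambda)$ into an inequality between products of hook lengths. The hypothesis $\lambda_{1} \geq k$ is the crucial structural input: it guarantees that $\mu := [\lambda_{1}, 2k - \lambda_{1}]$ is a valid two-row partition and that the first row of any admissible $\lambda$ dominates the sum of all remaining rows.

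My first instinct is to argue by induction on the number of parts $t$ of $\lambda$, with base case $t \leq 2$ immediate. For the inductive step, I would seek a move that replaces $\lambda$ by a partition $\lambda'$ of $2k$ with the same first part, one fewer row, and $m(\lambda') \leq m(\lambda)$. The most natural candidate is to absorb the last row of $\lambda$ into its second row, producing
\[
\lambda' = [\lambda_{1},\, \lambda_{2} + \lambda_{t},\, \lambda_{3},\, \ldots,\, \lambda_{t-1}],
\]
which is a valid partition under $\lambda_{1} \geq k$ since $\lambda_{2} + \lambda_{t} \leq 2k - \lambda_{1} \leq \lambda_{1}$. Unfortunately, this move is not always dimension-decreasing: for instance $\lambda = [6,1,1,1,1]$ gives $\lambda' = [6,2,1,1]$ with $m(\lambda) = 126 < 350 = m(\lambda')$, even though both still dominate $m([6,4]) = 90$. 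So the reducing move must be chosen in a case-dependent way, for instance by distinguishing whether the tail of $\lambda$ ends in several equal parts of size $1$ or in a single part of size $\geq 2$.

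The main obstacle is to produce, in each case, a dimension-decreasing reduction and then to verify it by a careful hook length comparison. An attractive alternative that bypasses the need for a single-step reduction is to compare $\lambda$ and $\mu$ directly via their hook length products: since both share a first row of length $\lambda_{1}$, one can try to cancel common factors and reduce the inequality to a statement about the column lengths $\lambda_{j}^{*}$ and $\mu_{j}^{*}$. Here $\mu_{j}^{*} \leq 2$ for every $j$, while $\lambda_{j}^{*}$ can be as large as $t$; the hypothesis $\lambda_{1} \geq k$ should then let one pair the ``extra'' vertical mass of $\lambda$ in certain columns with horizontal contributions elsewhere so that the full product $\prod_{c \in \mu} h_{\mu}(c)$ dominates $\prod_{c \in \lambda} h_{\lambda}(c)$. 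Making this cancellation precise, and in particular handling edge cases like $\lambda = [6,1,1,1,1]$ where some individual hooks in $\mu$ are larger and others smaller than in $\lambda$, is the main technical step, and is the point at which I would most expect to need an ingredient beyond the hook length formula alone, such as a Jacobi--Trudi-style determinantal identity applied to $\lambda$ and $\mu$ simultaneously.
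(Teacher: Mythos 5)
This theorem is quoted in the paper from Rasala \cite{Ra} without proof, so there is no in-paper argument to compare against; I am judging your proposal on its own terms. As written it is not a proof: each of the three strategies you describe is left incomplete. Your own counterexample $\lambda=[6,1,1,1,1]$ shows that the row-merging induction fails as stated, and you do not supply the promised case-dependent replacement; the direct hook-length cancellation between $\lambda$ and $\mu=[\lambda_1,2k-\lambda_1]$ is only described as a goal (``making this cancellation precise \dots is the main technical step''); and the Jacobi--Trudi suggestion is a single sentence. So the actual content of the theorem --- why $\lambda_1\ge k$ forces $m(\mu)\le m(\lambda)$ --- is nowhere established.

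The missing ingredient is a multiplicative lower bound rather than a single-step reduction. Writing $\bar\lambda=[\lambda_2,\dots,\lambda_t]$ for the partition of $2k-\lambda_1$ obtained by deleting the first row, one has
\[
m(\lambda)\;\ge\; m\bigl([\lambda_1,\,2k-\lambda_1]\bigr)\cdot m(\bar\lambda),
\]
valid whenever $\lambda_1\ge k$ (so that $[\lambda_1,2k-\lambda_1]$ is a genuine partition), and this gives the theorem at once since $m(\bar\lambda)\ge 1$. It is proved not by hook lengths but by an explicit injection on standard Young tableaux: given an SYT of shape $[\lambda_1,2k-\lambda_1]$ together with an SYT of shape $\bar\lambda$, place the first-row entries of the two-row tableau into the first row of $\lambda$ in increasing order, and distribute its second-row entries into the cells below the first row according to the $\bar\lambda$-tableau. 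The column condition between rows one and two of $\lambda$ holds because the entry in cell $(2,j)$ is at least the $j$-th smallest of the ``lower'' entries, which already exceeds the $j$-th smallest of the ``upper'' entries by the two-row condition; the map is visibly injective. Your numerical examples are consistent with this bound ($126\ge 90\cdot 1$ and $350\ge 90\cdot 3$), and this product estimate is essentially the one underlying Rasala's argument. I would recommend either carrying out this injection in full or simply leaving the theorem as a citation, as the paper does.
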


\subsection{Set-wise $2$-intersecting perfect matchings}\label{setwise2int}
If we can find a set of coefficients for the graph $B_{2}(2k)$ in (\ref{eq:W.AdjMat}), such that the row sum and the least eigenvalue are $\frac{(2k-1)(2k-3)}{3}-1$ and $-1$ respectively, then the ratio bound will hold with equality,

\[
\mid \mathcal{S}_{2}(2k) \mid = \frac{|V(X)|}{1-\frac{d}{\tau}} = \frac{(2k-1)!!}{1-\frac{\frac{(2k-1)(2k-3)-1}{3}}{-1}} = 3(2k-5)!!.
\]

For $k\geq 3$, set all coefficients $a_{\lambda}$ in $B_{2}(2k)$ to be $0$, except $a_{[2k]}$ and $a_{[2k-2,2]}$. Then we have

\[
\hat{B}_{2}(2k) = a_{[2k]}A_{[2k]}+a_{[2k-2,2]}A_{[2k-2,2]}.
\] 
Using the general formulas for the eigenvalues of $A_{[2k]}$ and $A_{[2k-2,2]}$ in the character table \ref{CharTable}, we construct a system of linear equations in which equations correspond to the irreducible modules $[2k]$, $[2k-2,2]$, and $[2k-4,4]$,

\newcolumntype{C}{>{{}}c<{{}}}
\[
\setlength\arraycolsep{0pt}
\renewcommand\arraystretch{1.25}
\begin{array}{*{3}{rC}l}
   (2k-2)!!\mathbf{a_{[2k]}} & + &  (k)(2k-4)!!\mathbf{a_{[2k-2,2]}} & = & \frac{(2k-1)(2k-3)}{3}-1, \\
   -(2k-4)!!\mathbf{a_{[2k]}} & + &  \frac{1}{2}(2k-4)!!\mathbf{a_{[2k-2,2]}} & = & -1, \\
   -(2k-6)!!\mathbf{a_{[2k]}} & - &  (5k-12)(2k-8)!!\mathbf{a_{[2k-2,2]}} & = & -1.
\end{array}
\]
By considering the second and the third equations above, we obtain the unique solutions $a_{[2k]} = \frac{k}{3(2k-4)!!}$ and $a_{[2k-2,2]} = \frac{(2k-6)}{3(2k-4)!!}$, for all $k\geq 4$;  which satisfies the first equation above. Thus, the row sum of $\hat{B}_{2}(2k)$ would be $d_{\hat{B}} = \frac{(2k-1)(2k-3)}{3}-1$. To finish this argument, we prove that every other eigenvalue of $\hat{B}_{2}(2k)$ is between $-1$ and $d_{\hat{B}}$.
\begin{theorem}\label{LeastEvalTrick2t}
For $k\geq 7$, let 
\[
\hat{B}_{2}(2k) = \frac{k}{3(2k-4)!!}A_{[2k]}+\frac{(2k-6)}{3(2k-4)!!}A_{[2k-2,2]}.
\]
Then the row sum and the least eigenvalue of the matrix $\hat{B}_{2}(2k)$ are $\frac{(2k-1)(2k-3)}{3}-1$ and $-1$, respectively. Furthermore, the only modules with eigenvalue equal to -1 are $[2k-2,2]$ and $[2k-4,4]$, and all other eigenvalues are in $(-1,\frac{(2k-1)(2k-3)}{3}-1 )$.
\end{theorem}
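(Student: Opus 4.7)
The plan is to decompose the spectrum of $\hat{B}_{2}(2k)$ by the $\Sym(2k)$-module structure of the perfect matching scheme and verify the claimed bound tier by tier. Since $\hat{B}_{2}(2k)\in\mathbb{C}[\mathcal{A}]$, its eigenvalues are indexed by the even partitions $\mu\vdash 2k$ and satisfy
\[
\theta_{\mu} \;=\; \frac{k}{3(2k-4)!!}\,\theta^{\mu}_{[2k]} \;+\; \frac{2k-6}{3(2k-4)!!}\,\theta^{\mu}_{[2k-2,2]}.
\]
The all-ones vector is the eigenvector on the trivial $[2k]$-module, so the row sum equals $\theta_{[2k]}$, which by construction of the coefficients is $\frac{(2k-1)(2k-3)}{3}-1$. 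The same linear system that defined $a_{[2k]}$ and $a_{[2k-2,2]}$ also pins $\theta_{[2k-2,2]}=\theta_{[2k-4,4]}=-1$, so this first tier is automatic.

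For the next tier I would substitute the formulas from Table \ref{PartOfCharTable}, together with the corresponding entries of the $[2k]$ and $[2k-2,2]$ classes from Appendix \ref{Sec:Appendix1}, to evaluate $\theta_{[2k-4,2,2]}$ and $\theta_{[2k-6,6]}$ as explicit rational functions of $k$. Verifying that each lies strictly inside $\bigl(-1,\tfrac{(2k-1)(2k-3)}{3}-1\bigr)$ for $k\geq 7$ then reduces to a routine polynomial check.

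For every remaining even partition $\mu$ the goal is to push $|\theta_{\mu}|$ strictly below $1$ using only the dimension $m_{\mu}$. The standard orthogonality relation of the first eigenmatrix,
\[
\sum_{\mu} m_{\mu}\bigl(\theta^{\mu}_{\lambda}\bigr)^{2} \;=\; (2k-1)!!\,v_{\lambda},
\]
yields $|\theta^{\mu}_{\lambda}|\leq\sqrt{(2k-1)!!\,v_{\lambda}/m_{\mu}}$, and combined with the triangle inequality on the two-term sum above it produces a uniform bound on $|\theta_{\mu}|$ that depends only on $m_{\mu}$ and on the known valencies $v_{[2k]}$ and $v_{[2k-2,2]}$. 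What remains is a sufficiently strong lower bound on $m_{\mu}$: two-row partitions $[2k-j,j]$ with even $j\geq 8$ have dimension $\binom{2k}{j}-\binom{2k}{j-1}$, a polynomial of degree $j$ in $k$ that easily beats the required threshold; partitions with three or more nonzero parts (other than $[2k-4,2,2]$ itself) are reduced to a two-row case by Theorem \ref{RaTheorem}; non-primary partitions are replaced by their duals via $m(\lambda)=m(\lambda^{*})$; and primary partitions with $\lambda_{1}<k$ receive the super-exponential lower bound of Theorem \ref{F(n)Theorem}.

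The main obstacle is assembling this last tier into a clean uniform argument. The threshold $k=7$ is presumably where the sporadic boundary modules (for instance $[2k-8,8]$ or $[2k-6,4,2]$) first satisfy the required dimension inequality, and these boundary cases will likely need a direct evaluation of the hook length formula rather than the generic asymptotic estimate before the recursive bounds of Theorems \ref{F(n)Theorem} and \ref{RaTheorem} comfortably take over.
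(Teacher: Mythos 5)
Your proposal follows essentially the same route as the paper: the row sum and the $-1$ eigenvalues on $[2k-2,2]$ and $[2k-4,4]$ come for free from the defining linear system, the $[2k-4,2,2]$ eigenvalue is evaluated explicitly from the character table, and every remaining module is handled by a trace/orthogonality bound of the form $m_{\mu}\theta_{\mu}^{2}\leq C(k)$ combined with the dimension lower bounds of Theorems \ref{F(n)Theorem} and \ref{RaTheorem} (splitting on $\lambda_{1}\geq k$ versus $\lambda_{1}<k$ and passing to duals for non-primary partitions). The only cosmetic differences are that you apply orthogonality to each class separately and lose a triangle-inequality factor where the paper computes $\operatorname{tr}(\hat{B}_{2}(2k)^{2})$ directly, and that the boundary cases you flag are resolved in the paper by direct computation with Srinivasan's character tables for $k\leq 12$ and a Maple check for $12\leq k\leq 33$.
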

\begin{proof}
In \cite{Sri1}, Srinivasan implemented a Maple program to compute the complete character table of the perfect matching association scheme for all $k\leq 40$. So by utilizing the complete character table for $3\leq k\leq 12$, we find the eigenvalues of $\hat{B}_{2}(2k)$, which will verify that the equality holds in the ratio bound. For the remainder of the proof, let $k\geq 12$. Let $\{d_{\hat{B}}^{(1)}, -1^{(m_{1})}, -1^{(m_{2})},\theta_{3}^{(m_{3})}, \dots,  \theta_{\ell}^{(m_{\ell})} \}$ be the spectrum of the matrix $\hat{B}_{2}(2k)$, where the values $m_{i}$ are the multiplicities of the eigenvalues.

It is known that the multiplicity of the for any module $[2k-\ell,\ell]$; say $m([2k-\ell, \ell])$; can be calculated by $m([2k-\ell, \ell]) = \binom{2k}{\ell}-\binom{2k}{\ell-1}$ \cite[Section 12.6]{GMB}. Hence, 
\begin{align*}
m_{1} =& \frac{2k(2k-3)}{2},\\
m_{2} =& \frac{2k(2k-1)(2k-2)(2k-7)}{4!}.
\end{align*}
Now consider the row sum of the matrix $(\hat{B)}^{2}$. The main diagonal entries of $(\hat{B)}^{2}$ are given by
\begin{align*}
\hat{B}^{2}(i,i) = d_{\hat{B}^{2}} =& a_{[2k]}^{2}d_{[2k]}+a_{[2k-2,2]}^{2}d_{[2k-2,2]}\\
=& \frac{k^{2}}{9(2k-4)!!^{2}}(2k-2)!!+\frac{(2k-6)^{2}}{9(2k-4)!!^{2}}k(2k-4)!!\\ 
=& \frac{k(6k^{2}-26k+36)}{9(2k-4)!!},
\end{align*}
where $d_{[2k]}$ and $d_{[2k-2,2]}$ are the degrees of the matrices $A_{[2k]}$ and $A_{[2k-2,2]}$, respectively.
It is also well-known that the trace of a matrix is the sum of its eigenvalues. So,
\begin{align}
\left( \frac{k(6k^{2}-26k+36)}{9(2k-4)!!} \right) (2k-1)!! \nonumber = d_{\hat{B}}^{2}+m_{1}+m_{2}+\sum_{i=3}^{k}m_{i}\theta_{i}^{2}.
\end{align}
Hence,
\begin{align*}
&k\left(6k^{2}-26k+36\right)\frac{(2k-1)!!}{9(2k-4)!!}- \left(\frac{(2k-1)(2k-3)-3}{3}\right)^{2} \\
&- \frac{2k(2k-3)}{2}-\frac{2k(2k-1)(2k-2)(2k-7)}{4!}=\sum_{i=3}^{k}m_{i}\theta_{i}^{2}.
\end{align*}
This means that for every $\theta_{i}$, $3\leq i \leq \ell$, we have that,
\begin{equation}\label{ineq00}
k\left(6k^{2}-26k+36\right)\frac{(2k-1)!!}{9(2k-4)!!}-\frac{k(11k-25)(2k-1)(2k-3)}{18} \geq m_{i}\theta_{i}^{2}.
\end{equation}

Next we show that any eigenvalue $\theta_{i}$ is greater than $-1$, for $3\leq i \leq \ell$. Let $\theta_{3}$ be the eigenvalue of $\hat{B_{2}(2k)}$ corresponding to the module $[2k-4,2,2]$. Then, $\theta_{2}$ is the linear combination of the eigevalues of the matrices $A_{[2k]}$ and $A_{[2k-2,2]}$ in the character table \ref{CharTable}, for the same module. 
\begin{equation*}
\theta_{3} = \frac{k}{3(2k-4)!!}(2(2k-6)!!)+\frac{(2k-6)}{3(2k-4)!!}(-(2k-6)!!)= \frac{1}{(k-2)}.
\end{equation*}
For the other eigenvalues, it is sufficient to show the following inequality holds.
\begin{equation}\label{eq:EvalTrick4}
k\left(6k^{2}-26k+36\right)\frac{(2k-1)!!}{9(2k-4)!!}-\frac{k(11k-25)(2k-1)(2k-3)}{18} < m_{i},
\end{equation}
since this, along with the equation (\ref{ineq00}) shows that $\theta_{i}^{2}<1$.
Let $m_{i}$ be the multiplicity of the $\lambda_{i}$-module, where $\lambda_{i} = [\lambda_{i_1}, \lambda_{i_2},\cdots, \lambda_{i_\ell}]$, and $\lambda_{i_1}\geq \lambda_{i_2} \geq \cdots \geq \lambda_{i_\ell}$. There are $2$ cases:
\begin{itemize}
\item[{\bf Case 1.}] Suppose that $\lambda_{i_1}\geq k$.

Consider the module $[2k-6,6]$. Then,

\[
m_{i} = \binom{2k}{6}-\binom{2k}{5} = \frac{(2k)(2k-1)(2k-2)(2k-3)(2k-4)(2k-11)}{6!}.
\]
By substituting the formula for $m_{i}$ in inequality (\ref{eq:EvalTrick4}), and approximating the term $(2k-4)!!$ by $(2k-5)!!$, the inequality holds for all $k\geq 34$ (this can be confirmed with Maple software). Also, using Maple for the values of $12\leq k\leq 33$, we can see that inequality (\ref{eq:EvalTrick4}) holds. By Theorem \ref{RaTheorem}, and the fact that $m([2k-\ell, \ell]) = \binom{2k}{\ell}-\binom{2k}{\ell-1}$, the multiplicity $m_{i}$ is greater than or equal to the multiplicity of the module $[2k-6,6]$, for all modules with $k \leq \lambda_{i_1}\leq 2k-6$. Hence, inequality (\ref{eq:EvalTrick4}) holds for all modules with $\lambda_{i_{1}}>k$.
\item[{\bf Case 2.}] Suppose that $\lambda_{i_1}< k$. 

If $\lambda_{i}$ is primary, by Theorem \ref{F(n)Theorem}, For $2k\geq 8$, we have that,
\begin{equation}\label{UpBound}
m(\lambda)\geq F(2k) = 2F(2k-1) \geq (2)     \left( \frac{3}{2} \right) F(2k-2)\geq \cdots \geq 3^{k-4}F(8)\geq 403200(3^{k-4}).
\end{equation}
Using (\ref{UpBound}), and approximating the term $(2k-4)!!$ by $(2k-5!!)$ in (\ref{eq:EvalTrick4}), it is sufficient to show that,
\[
\frac{48k^{5}-348k^{4}+928k^{3}-965k^{2}+921k}{18}<403200(3^{k-4}).
\]
The expression $-348k^{4}+928k^{3}-965k^{2}+921k$ in the numerator of the fraction on the left side of the above inequality is negative for all $k\geq 2$, so it is sufficient to check when $\frac{48k^{5}}{18}<403200(3^{k-4})$. In fact, this inequality is true for all $k$; therefore, for all primary partitions $\lambda_{i}$ with $\lambda-{i_1}<k$, the in equality (\ref{eq:EvalTrick4}) holds.

Now assume that $\lambda_{i_1}< k$ and $\lambda_{i}$ is not primary. The proof of this part follows from the proof of Case 3 in Theorem 4.11 in \cite{2int}. The dual of $\lambda_{i}$; $\lambda^{*} = (\lambda_{1}^{*},\lambda_{2}^{*},\cdots, \lambda_{t}^{*})$; is primary and $\lambda_{1}^{*}=\lambda_{2}^{*}$. Note that $m(\lambda_{i}) = m_{\lambda^{*}}$. If $\lambda_{1}^{*}\geq k$, then $\lambda^{*}$ is $[k,k]$, which is covered  by Case 1. For $\lambda_{1}^{*}<k$, we just proved the result in Case 2.
\end{itemize}

\end{proof}
\section{Conjecture on Set-wise $3$-intersecting perfect matchings}\label{sec:set-wise3int}
The approach we take in this section is the same as the one we took in \ref{setwise2int} for set-wise $2$-intersecting perfect matchings. For $t=3$, if we can find a set of coefficients in (\ref{eq:W.AdjMat}), so that the weighted adjacency matrix has degree $d=\frac{(2k-1)(2k-3)(2k-5)}{15}-1$ and least eigenvalue  $\tau = -1$, then $1-\frac{d}{\tau}=\frac{\mid V(N_{3}(2k)) \mid}{\mid \mathcal{S}_{3}(2k) \mid}$; thus, the ratio bound holds with equality. Let
\[
\hat{B}_{3}(2k) = a_{[2k]}A_{[2k]}+a_{[2k-2,2]}A_{[2k-2,2]}+a_{[2k-4,2,2]}A_{[2k-4,2,2]}.
\] 
We set a system of linear equations for $\hat{B}_{3}(2k)$; similar to the one for $t=2$. In this system equations correspond to the irreducible modules $[2k-2,2]$, $[2k-4,4]$, and $[2k-6,6]$; we want weights so the corresponding eigenvalues equal to $-1$. 

\newcolumntype{C}{>{{}}c<{{}}}
\[
\setlength\arraycolsep{0pt}
\renewcommand\arraystretch{1.25}
\begin{array}{*{3}{rC}l}
\frac{(2k)!!}{2k}\mathbf{a_{[2k]}}& + & \frac{(2k)!!}{2(2k-2)}\mathbf{a_{[2k-2,2]}}& + & \frac{(2k)!!}{8(2k-4)}\mathbf{a_{[2k-4,2,2]}} & = & d, \\

-(2k-4)!!\mathbf{a_{[2k]}}& + &  \frac{(2k-4)!!}{2}\mathbf{a_{[2k-2,2]}}& + & \frac{(3k-2)(2k-6)!!}{4}\mathbf{a_{[2k-4,2,2]}} & = & -1, \\

-(2k-6)!!\mathbf{a_{[2k]}}& - &  (5k-12)(2k-8)!!\mathbf{a_{[2k-2,2]}}& - & \frac{(k+3)(2k-8)!!}{4}\mathbf{a_{[2k-4,2,2]}} & = & -1, \\

-3(2k-8)!!\mathbf{a_{[2k]}}& - &  (3k-10)(2k-10)!!\mathbf{a_{[2k-2,2]}}& - & \frac{-3(13k^{2}-101k+190)(2k-12)!!}{2}\mathbf{a_{[2k-4,2,2]}} & = & -1, \\
\end{array}
\]

For all $k\geq 6$, the unique solutions of this system are $\mathbf{a_{[2k]}} = \frac{(k-3)(7k-10)}{30(2k-4)!!}$, $\mathbf{a_{[2k-2,2]}} = \frac{-2(k^{2}-10k+15)}{15(2k-4)!!}$, and $\mathbf{a_{[2k-4,2,2]}} = \frac{2(k-5)}{5(2k-6)!!}$ . 

\begin{conj}\label{LeastEvalTrick3t}
For $k\geq 11$, let 
\[
\hat{B}_{3}(2k) = A_{[2k]}\frac{(k-3)(7k-10)}{30(2k-4)!!}+A_{[2k-2,2]}\frac{-2(k^{2}-10k+15)}{15(2k-4)!!}+A_{[2k-4,2,2]}\frac{2(k-5)}{5(2k-6)!!}.
\]
Then the row sum and the least eigenvalue of the matrix $\hat{B}_{3}(2k)$ are $\frac{(2k-1)(2k-3)(2k-5)}{15}-1$ and $-1$, respectively. Furthermore, the only modules with eigenvalue equal to -1 are $[2k-2,2]$, $[2k-4,4]$, and $[2k-6,6]$.
\end{conj}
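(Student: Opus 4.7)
The plan is to mirror the argument of Theorem~\ref{LeastEvalTrick2t}, adapted for one additional matrix $A_{[2k-4,2,2]}$ in the weighted sum and one extra linear equation. By construction of the system, the first equation forces the row sum (equivalently the $[2k]$-eigenvalue) of $\hat{B}_{3}(2k)$ to equal $d = \frac{(2k-1)(2k-3)(2k-5)}{15}-1$, while the remaining three equations force the modules $[2k-2,2]$, $[2k-4,4]$, and $[2k-6,6]$ to contribute eigenvalue $-1$. Since $\hat{B}_{3}(2k)$ is a nonnegative real matrix with constant row sum $d$, its Perron eigenvalue is $d$, so it remains only to show that every other module contributes an eigenvalue strictly greater than $-1$.

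First I would compute the eigenvalue $\theta_{[2k-4,2,2]}$ of $\hat{B}_{3}(2k)$ on the module $[2k-4,2,2]$ directly, using the $[2k-4,2,2]$-entry of Table~\ref{PartOfCharTable} together with the $[2k-4,2,2]$-entries in the $[2k]$- and $[2k-2,2]$-columns of the character table (the latter were already extracted in the proof of Theorem~\ref{LeastEvalTrick2t}). Substituting the given coefficients produces an explicit rational function of $k$ which I expect to simplify to a value clearly exceeding $-1$ for $k \geq 11$; this verifies that $[2k-4,2,2]$ is not another $(-1)$-eigenspace.

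The bulk of the proof is then a trace-squared bound. Since $\hat{B}_{3}(2k)$ lies in the Bose--Mesner algebra, its diagonal entries are all equal to
\[
d_{\hat{B}^{2}} \;=\; a_{[2k]}^{2}\,d_{[2k]} + a_{[2k-2,2]}^{2}\,d_{[2k-2,2]} + a_{[2k-4,2,2]}^{2}\,d_{[2k-4,2,2]},
\]
where $d_{\lambda}$ denotes the valency of $A_{\lambda}$. Hence $\operatorname{tr}(\hat{B}_{3}(2k)^{2}) = (2k-1)!!\,d_{\hat{B}^{2}} = \sum_{\mu} m_{\mu}\,\theta_{\mu}^{2}$. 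Subtracting the contributions of the five modules already pinned down yields an explicit bound $C(k)$ on $\sum_{\mu \text{ remaining}} m_{\mu}\,\theta_{\mu}^{2}$, so each remaining $\mu$ satisfies $\theta_{\mu}^{2} \leq C(k)/m_{\mu}$. It therefore suffices to show $m_{\mu} > C(k)$ for every such $\mu$, which I would verify by splitting into the same three cases as in Theorem~\ref{LeastEvalTrick2t}. \textbf{Case 1} ($\mu_{1} \geq k$ with $\mu \notin \{[2k],[2k-2,2],[2k-4,4],[2k-6,6]\}$): apply Theorem~\ref{RaTheorem} to reduce to $\mu = [2k-8,8]$, where $m_{\mu} = \binom{2k}{8}-\binom{2k}{7}$, and (after approximating $(2k-4)!!$ by $(2k-5)!!$ as in Theorem~\ref{LeastEvalTrick2t}) check the resulting polynomial inequality. \textbf{Case 2} ($\mu_{1} < k$ and $\mu$ primary): Theorem~\ref{F(n)Theorem} gives $m_{\mu} \geq F(2k) \geq 403200\cdot 3^{k-4}$, which dominates the polynomial growth of $C(k)$ trivially. \textbf{Case 3} ($\mu$ not primary): reduce to the primary case via the duality $m(\mu) = m(\mu^{*})$.

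The main obstacle, and the reason the statement is posed only as a conjecture, is the explicit arithmetic in Case~1. With three forced $(-1)$-eigenvalues instead of two, the leading coefficient of $C(k)$ is substantially larger, and so the threshold $K_{0}$ above which the asymptotic inequality $m([2k-8,8]) > C(k)$ holds is larger than in the $t=2$ setting. For $11 \leq k \leq K_{0}$ the argument must be completed by evaluating $\hat{B}_{3}(2k)$ directly from Srinivasan's complete character table, which covers $k \leq 40$; thus the proof will go through provided one can establish $K_{0} \leq 40$. Obtaining this bound on $K_{0}$ (together with the explicit small-$k$ control on $\theta_{[2k-4,2,2]}$ from the first step) is the delicate computational work on which the conjecture hinges.
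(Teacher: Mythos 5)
There is a genuine gap, and it sits exactly where the paper itself locates the obstruction that makes this statement a conjecture rather than a theorem. Your Case~1 claims that every module $\mu$ with $\mu_{1}\geq k$ (other than the four pinned-down ones) can be reduced to $[2k-8,8]$ via Theorem~\ref{RaTheorem}. That theorem only gives $m(\mu)\geq m([\mu_{1},2k-\mu_{1}])$, so the reduction to $[2k-8,8]$ works only when $\mu_{1}\leq 2k-8$. The modules $[2k-6,4,2]$ and $[2k-6,2,2,2]$ have $\mu_{1}=2k-6$, so the theorem only yields $m(\mu)\geq m([2k-6,6])=\Theta(k^{6})$ --- and this is sharp, since both dimensions genuinely grow like $\binom{2k}{6}$. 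Meanwhile the trace-squared bound $C(k)$ for $t=3$ is of order $k^{7}$ (one checks $\operatorname{tr}(\hat{B}_{3}(2k)^{2})=(2k-1)!!\,d_{\hat{B}^{2}}\sim k^{5}\cdot(2k-1)(2k-3)$ after the $(2k-4)!!\approx(2k-5)!!$ approximation), so the inequality $m_{\mu}>C(k)$ \emph{fails} for these two modules for all large $k$; no choice of threshold $K_{0}$ rescues this. Nor can they be handled the way you handle $[2k-4,2,2]$, by reading the eigenvalue off the character table: the rows of Table~\ref{CharTable} for the modules $[2k-6,4,2]$ and $[2k-6,2,2,2]$ are not available in closed form, and the paper states that computing them via quotient graphs is prohibitively complicated. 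This is precisely why the statement is posed as a conjecture, verified only numerically from Srinivasan's tables for specific $k$.

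The rest of your outline matches the paper's intended (partial) argument: the linear system forces the row sum and the three $-1$ eigenvalues, $[2k-4,2,2]$ is disposed of by direct evaluation from Table~\ref{PartOfCharTable}, modules with $k\leq\mu_{1}\leq 2k-8$ go through the trace bound with the $[2k-8,8]$ reduction, and the primary/dual cases handle $\mu_{1}<k$ via Theorem~\ref{F(n)Theorem}. But you have misidentified the delicate point: it is not the size of the threshold $K_{0}$ in the polynomial inequality, it is that two specific modules escape both the multiplicity bound and the available character-table data. A correct writeup would have to either compute the $[2k-6,4,2]$- and $[2k-6,2,2,2]$-eigenvalues of $A_{[2k]}$, $A_{[2k-2,2]}$, and $A_{[2k-4,2,2]}$ in general form, or find a different bound for those two eigenspaces.
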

Similar to the proof of Theorem \ref{LeastEvalTrick2t}, for all modules except $[2k-6,4,2]$ and $[2k-6,2,2,2]$, it can be shown that the associated eigenvalues are between $\frac{(2k-1)(2k-3)(2k-5)}{15}-1$ and $-1$. We guess that this is true for modules $[2k-6,4,2]$ and $[2k-6,2,2,2]$; as we confirmed this with Maple \cite{Sri1}; but finding the exact formulas of the corresponding eigenvalues of this modules using the quotient graphs is quite complicate. Using symmetric functions and character theory might be an approaches for more broad results in the future \cite{Mu}.
\section{Further Work}
In this paper we proved that the Erd\H{o}s-Ko-Rado theorem holds for set-wise $2$-intersecting perfect matchings of the complete graph $K_{2k}$. The very first question that arises is if we can extend our results to set-wise $t$-intersecting perfect matchings of the complete graph $K_{2k}$ with $3\leq t \leq \frac{k}{2}$. In Section \ref{setwise2int}, we defined a weighted adjacency for $t=3$ and conjectured for which the ratio bound holds with equality, so starting from set-wise $3$-intersecting could be a very good start to do further investigating on this interesting problem.

Also, while we were working with the character tables of the perfect matchings association scheme, we found the following patterns:

\begin{conj}
The eigenvalues of the class $[2k]$ corresponding to the modules $[2k-2i,2i]$ and $[2k-2i,2,\cdots,2]$ are $-(2i-3)!!(2k-2i-2)!!$ and $(-1)^{i}(i!)(2k-2i-2)!!$, respectively.
\end{conj}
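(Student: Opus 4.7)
The plan is to prove both formulas by induction on $i$, leaning on the quotient-graph apparatus of Section \ref{sec:PM Asso} together with the trace-subtraction technique that was already used to populate the $[2k-4,2,2]$ column of Table \ref{PartOfCharTable}. The cases $i=1,2,3$ of both formulas are already confirmed by the character-table entries recorded in Table \ref{PartOfCharTable} and by the linear systems set up for $\hat{B}_{2}(2k)$ and $\hat{B}_{3}(2k)$ in Sections \ref{sec:EKR} and \ref{sec:set-wise3int}, so these can serve as the base of the induction and as a reality check on the closed forms.

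For the two-row modules $[2k-2i,2i]$, I would quotient $X_{[2k]}$ by the orbit partition of the Young subgroup $\Sym(2k-2i)\times\Sym(2i)$. Two perfect matchings lie in the same orbit precisely when the number of their edges contained entirely inside the distinguished $2i$-block agrees, so the quotient graph $X_{[2k]}/[2k-2i,2i]$ has $i+1$ vertex classes indexed by $j\in\{0,1,\dots,i\}$. The $(j,j')$-entry is a concrete count of matchings whose union with a fixed representative forms a single $2k$-cycle, which can be assembled from standard cycle-enumeration formulas. By Theorem \ref{modules}, the spectrum of this quotient consists of eigenvalues sitting in modules $\mu\geq[2k-2i,2i]$ in dominance order, and these are precisely the $i+1$ two-row partitions $[2k],[2k-2,2],\dots,[2k-2i,2i]$, matching the size of the quotient. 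Since the first $i$ of these eigenvalues are known by induction, computing $\tr(X_{[2k]}/[2k-2i,2i])$ and subtracting isolates the last unknown eigenvalue, which should telescope to $-(2i-3)!!(2k-2i-2)!!$ via a double-factorial identity.

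For the hook-like modules $[2k-2i,2,\ldots,2]$, the natural choice is the finer Young subgroup $\Sym(2k-2i)\times\Sym(2)\times\cdots\times\Sym(2)$ that marks $i$ unordered pairs inside the $2i$-block. The partitions $\mu$ dominating $[2k-2i,2,\ldots,2]$ include the full two-row family, already handled by the first formula, together with the shorter hook shapes $[2k-2j,2,\ldots,2]$ for $j<i$, all of whose $[2k]$-class eigenvalues are known by the inductive hypothesis. The trace-subtraction argument then isolates the eigenvalue at $[2k-2i,2,\ldots,2]$; the prefactor $(-1)^{i}i!$ I expect to emerge as an inclusion-exclusion signature over which of the $i$ marked pairs end up fully matched, the $i!$ arising from the reorderings of these marked pairs in the orbit sum, and the $(2k-2i-2)!!$ as the count of matchings of the remaining $2k-2i$ vertices that close up into a single cycle.

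The main technical obstacle will be carrying out the single-$2k$-cycle completion count in closed form for a partial matching with a specified interaction pattern against a distinguished block: while the count is elementary for each fixed $i$, getting a formula jointly in $i$ and $k$ amenable to the trace identity requires careful bookkeeping. A parallel route I would try is character-theoretic. Via the Gelfand pair $(\Sym(2k),\Sym(2)\wr\Sym(k))$, the eigenvalue $\theta^{\mu}_{[2k]}$ can be written as a zonal spherical sum evaluated at the double coset representative of the class $[2k]$, and for the two-row and hook-like $\mu$ under consideration the iterated Murnaghan-Nakayama rule, peeling one $2$-ribbon at a time, should produce exactly the $\pm1$ and $(2k-2i-2)!!$ factors predicted. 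If the quotient-graph counts prove intractable for large $i$, the character-theoretic path is likely to close the gap and may also suggest a general formula for $\theta^{\mu}_{[2k]}$ at arbitrary even $\mu$, as hinted at in \cite{Mu}.
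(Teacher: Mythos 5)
The statement you are proving is stated in the paper only as a conjecture in the ``Further Work'' section; the paper offers no proof, so there is nothing to compare your argument against, and the question is simply whether your plan would close the conjecture. As written it does not: it is a programme with the two decisive computations left as expectations. For the two-row modules your structural setup is sound --- the quotient of $X_{[2k]}$ by the orbit partition of $\Sym(2k-2i)\times\Sym(2i)$ does have exactly $i+1$ classes, and the only even partitions dominating $[2k-2i,2i]$ are the two-row ones, each occurring once, so trace subtraction would indeed isolate one new eigenvalue per step. But the trace itself is the sum of the diagonal entries of the quotient, i.e.\ for each $j$ the number of perfect matchings $Q$ with exactly $j$ edges inside the distinguished $2i$-block whose union with a fixed such $P$ is a single $2k$-cycle, and you give no closed form for these counts nor carry out the ``telescoping to $-(2i-3)!!(2k-2i-2)!!$.'' That enumeration is the entire content of the claim; until it is done the first formula is unproved.

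The second half has a gap that is structural, not merely computational. The eigenvalues of the quotient by $\Sym(2k-2i)\times\Sym(2)\times\cdots\times\Sym(2)$ lie, by Theorem~\ref{modules}, in \emph{all} modules $\mu$ with $\mu\geq[2k-2i,2,\dots,2]$ in dominance order, and for $i\geq 3$ this set contains mixed shapes such as $[2k-6,4,2]$ (generally with multiplicity greater than one in the invariant space), not just the two-row and all-twos families. Those mixed-shape eigenvalues of the class $[2k]$ are covered by neither formula in the conjecture and are not supplied by your inductive hypothesis --- indeed Section~\ref{sec:set-wise3int} explicitly records that extracting the $[2k-6,4,2]$ and $[2k-6,2,2,2]$ eigenvalues from quotient graphs is already problematic. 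Consequently the trace of this quotient involves several unknowns and cannot isolate $\theta^{[2k-2i,2,\dots,2]}_{[2k]}$. To repair this you would need either a different equitable partition whose spectrum meets only modules you control, or the character-theoretic route you mention (zonal spherical functions for the Gelfand pair $(\Sym(2k),\Sym(2)\wr\Sym(k))$ evaluated at the $2k$-cycle class), carried out in detail rather than invoked; the latter seems the more promising path but is at present only a suggestion.
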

Note that these two eigenvalues have high multiplicities, so I think it is worthwhile to derive their formula.

\section{Acknowledgements}
The author wish to thank Shaun Fallat and Karen Meagher for the careful reading of the manuscript, and
for their insightful comments. 
\bibliographystyle{plain}

\begin{thebibliography}{99}

%
%
%
\bibitem{EKR.1961} 
P. Erd\H{o}s, Ch. Ko and R. Rado.
\newblock Intersection theorems for systems of finite sets. 
\newblock {\em Quarterly Journal of  Mathematics, Oxford Ser. (2)}, 12: 313-320, 1961.
%
%
\bibitem{Ellis}
D. Ellis. 
\newblock Setwise intersecting families of permutations.
\newblock {\em Journal of Combinatorial Theory}, Series A, 119.4: 825-849, 2012.

\bibitem{2int} 
S.M. Fallat, K. Meagher and M.N. Shirazi.
\newblock {The Erd\H{o}s-Ko-Rado theorem for $t$-intersecting families of perfect matchings.}
\newblock {\em Algebraic Combinatorics}, Volume 4, no. 4, pp. 575-598, 2021.
%
\bibitem{EKRvecSpace} 
P. Frankl and R. M. Wilson
\newblock {The Erd\H{o}s-Ko-Rado theorem for vector spaces.}
\newblock {\em Journal of Combinatorial Theory, Series A}, 43(2): 228-236, 1986
%
%
%

\bibitem{EKRpermGM} 
C. Godsil and K. Meagher 
\newblock{ A new proof of the Erd\H{o}s-{K}o-{R}ado theorem for intersecting families of permutations. }
\newblock {\em European Journal of Combinatorics}, 30(2): 404-414, 2009.
%
\bibitem{GMP} 
C. Godsil and K. Meagher 
\newblock{ An algebraic proof of the Erd\H{o}s-{K}o-{R}ado theorem for intersecting families of perfect matchings.}
\newblock {\em ARS MATHEMATICA CONTEMPORANEA}, 12(2): 205-217, 2017. 
\url{https://amc-journal.eu/index.php/amc/article/view/976}
%
\bibitem{GMB} 
C. Godsil  and K. Meagher 
{\em {E}rd\H{o}s-{K}o-{R}ado {T}heorems: {A}lgebraic {A}pproaches}, 
Vol. 149. Cambridge University Press, 2016.
%
%
%
\bibitem{EKRpermK}
C. Y. Ku and D. Renshaw
\newblock{{E}rd\H{o}s-{K}o-{R}ado theorems for permutations and set partitions.}
\newblock {\em Journal of Combinatorial Theory, Series A}, 115(6): 1008-1020, 2008
%
%
%
\bibitem{L}
N. Lindzey 
\newblock {E}rd\H{o}s-{K}o-{R}ado for perfect matchings. 
\newblock {\em European J. Combin.}, 65: 130-142, 2017.
%
\bibitem{MM}
K. Meagher and L. Moura 
\newblock{ {E}rd\H{o}s-{K}o-{R}ado theorems for uniform set-partition systems.}
\newblock{\em Electron J. Combin.}, 12(1):Research Paper 40, 12 pp. (electronic), 2005.
%
\bibitem{KMB}
K. Meagher, M.N. Shirazi, and B. Stevens 
\newblock{An Extension of the Erd\H{o}s-Ko-Rado Theorem to uniform set partitions.}
\newblock{\em arXiv preprint}, arXiv: 2108.07692, 2021.

\bibitem{Mu} 
M. Muzychuk.
On association schemes of the symmetric group $S_{2n}$ acting on partitions of type $2n$.
 {\em Bayreuth. Math. Schr.}, 47: 151-164, 1994.
%
\bibitem{Ra} 
R. Rasala. 
On the minimal degrees of characters of $S_{n}$. {\em Journal of Algebra}, 45(1): 132-181, 1977.
%
\bibitem{tDesign} 
R. M. I. Rands
\newblock{An extension of the Erd\H{o}s-{K}o-{R}ado theorem to $t$-designs.}
\newblock{\em J. Combin. Theory Ser. A}, 32(3):391-395, 1982.
%
\bibitem{Sri1} 
M.~K.~Srinivasan M. K. 
A Maple program for computing $\hat{\theta}^{2\lambda}_{2\mu}$. 
\url{http://www.math.iitb.ac.in/~mks/papers/EigenMatch.pdf}. 2018.
%
\bibitem{Sri2} 
M.~K.~Srinivasan M. K. The perfect matching association scheme. {\em Algebraic Combinatorics}, 3(3): 559-591, 2020.
%
%
\bibitem{W} 
R. M. Wilson
The exact bound in the Erd\H{o}s-{K}o-{R}ado theorem.
{\em Combinatorica}, 4(2-3): 247-257, 1984.

\end{thebibliography}

\newpage
\appendix
\section{Diagonal Entries of The Adjacency Matrices of Some Quotient graphs in the Perfect Matching Association Scheme}\label{Sec:Appendix1}

\begin{centering}
\begin{tabular}{N|M{2.7cm}|M{2.7cm}|}
\hline 
\vspace*{0.6cm}
\rule{0pt}{10pt}& $(k-1)(2k-6)!!$ & $~$ \\ [1ex] 
\hline
\vspace*{0.6cm}
\rule{0pt}{10pt}& $~$ & $\frac{1}{4}(2k^{2}+k-2)(2k-6)!!$ \\[1ex] 
\hline
\end{tabular}
\vspace*{0.2cm}
\captionof{table}{The adjacency matrix of $X_{[2k-4,2,2]}/[2k-2,2]$}
\label{[2k-4,2,2]/[2k-2,2]}
\end{centering}

\begin{centering}
\begin{tabular}{N| M{3.5cm} | M{3.9cm} | M{3.9cm} |}
\hline 
\vspace*{0.6cm}
\rule{0pt}{10pt}& $(2k-6)!!$ & $~$ & $~$ \\ [1ex] 
\hline
\vspace*{0.6cm}
\rule{0pt}{10pt}& $~$ & $\frac{1}{4}(9k-20)(2k-6)!!$ & $~$\\[1ex] 
\hline 
\vspace*{0.6cm}
\rule{0pt}{10pt}& $~$& $~$ & $(k^{3}-7k^{2}+\frac{75}{4}k-\frac{87}{4})(2k-8)!!$ \\[1ex] 
\hline 
\end{tabular}
\vspace*{0.2cm}
\captionof{table}{The adjacency matrix of $X_{[2k-4,2,2]}/[2k-4,4]$}
\end{centering}

\begin{centering}
\begin{tabular}{N| M{2cm} | M{3cm} | M{3.7cm} | M{3.7cm} |}
\hline 
\vspace*{0.6cm}
\rule{0pt}{15pt}& $\bf{0}$ & $~$ & $~$ & $~$ \\ [1ex] 
\hline
\vspace*{0.6cm}
\rule{0pt}{15pt}& $~$ & $(20k-78)(2k-8)!!$ & $~$ & $~$ \\[1ex] 
\hline 
\vspace*{0.6cm}
\rule{0pt}{15pt}& $~$ & $~$ & $(18k^{3}-221k^{2}+953k-1455)(2k-10)!!$ & $~$\\[1ex] 
\hline 
\vspace*{0.7cm}
\rule{0pt}{15pt}& $~$ & $~$ & $~$ & $\frac{1}{2}(2k-11)(4k^{4}-60k^{3}+371k^{2}-1155kk+1530)(2k-12)!!$\\[1ex] 
\hline 
\end{tabular}
\vspace*{0.2cm}
\captionof{table}{The adjacency matrix of $X_{[2k-4,2,2]}/[2k-6,6]$}
\end{centering}

\appendix
\subsection{Character Table of the Perfect Matching Association Scheme}

\FloatBarrier
\begin{sidewaystable}
\begin{centering}
\begin{tabular}[h]{N| m{1.25cm} ? m{1.75cm} | m{3.25cm} | m{4.35cm} | c | m{2.8cm}|}
\hline
\rule{0pt}{20pt}& ~ & $[2k]$ & $[2k-2,2]$ & $[2k-4,4]$ & $[2k-4,2,2]$ & $[2k-6,6]$ \\[4ex]
\Xhline{5\arrayrulewidth}
\rule{0pt}{20pt}& $\mathbf{\chi}_{[2k]}$ & $\frac{(2k)!!}{2k}$ & $\frac{(2k)!!}{2(2k-2)}$ & $\frac{(2k)!!}{4(2k-4)}$ & $\frac{(2k)!!}{8(2k-4)}$ & $\frac{(2k)!!}{6(2k-6)}$ \\[4ex]
\hline
 
\rule{0pt}{20pt}& $\chi_{[2k-2,2]}$ & $-(2k-4)!!$ & $\frac{(2k-4)!!}{2}$ & $\frac{-2k(2k-6)!!}{4}$ & \textcolor{RubineRed}{$\frac{(3k-2)(2k-6)!!}{4}$} & $\frac{-2k(2k-4)!!}{6(2k-6)}$ \\[4ex]
  
\rule{0pt}{20pt}& $\chi_{[2k-4,4]}$ & $-(2k-6)!!$ & $-(5k-12)(2k-8)!!$ & $\frac{(7k-15)(2k-8)!!}{2}$ & \textcolor{RubineRed}{$\frac{-(k+3)(2k-8)!!}{4}$} & $\frac{-2k(2k-6)!!}{6(2k-6)}$ \\[4ex]
 
\rule{0pt}{20pt}& $\chi_{[2k-4,2,2]}$ & $2(2k-6)!!$ & $-(2k-6)!!$ & $\frac{-(2k-6)!!}{2}$& \textcolor{RubineRed}{$(k^{2}-7k+12)(2k-10)!!$} & $\frac{4k(2k-6)!!}{6(2k-6)}$\\[4ex]

\rule{0pt}{20pt}& $\chi_{[2k-6,6]}$ & $-3(2k-8)!!$ & $-3(3k-10)(2k-10)!!$ & $-3(9k^{2}-71k+140)(2k-12)!!$ & \textcolor{RubineRed}{$\frac{-3}{2}(13k^{2}-101k+190)(2k-12)!!$} & $6(5k^{2}-38k+70)(2k-12)!!$\\[4ex]

\rule{0pt}{20pt}&  \vdots & \vdots & \vdots & \vdots & \vdots & \vdots \\[4ex]
\hline
\end{tabular}
\caption{Character table of the perfect matching association scheme}
\label{CharTable}
\end{centering}
\end{sidewaystable}
\FloatBarrier

\end{document}